\def\Ga{{\Gamma}}
\def\de{\delta}
\def\al{\alpha}
\def\ga{\gamma}
\def\Sym{{\rm Sym}\,}
\def\Ree{{\rm Ree}\,}
\def\PGaL{{\rm P\Gamma L}\,} 
\def\Sp{{\rm Sp}\,}
\def\PG{\rm PG}
\def\Aut{{\rm Aut}\,}
\def\ov{\overline}
\def\V{\mathcal{V}}
\def\w{\mathsf{w}}
\def\u{\mathsf{u}}
\title{Sporadic neighbour-transitive codes in Johnson graphs\thanks{Written in honour of Frank de Clerck}} 
\author{Max Neunh{\"o}ffer \and Cheryl E. Praeger}
\date{7~June 2013}
\institute{Max Neunh{\"o}ffer \at
School of Mathematics and Statistics,
Mathematical Institute,
North Haugh,
St Andrews, Fife KY16 9SS,
Scotland, United Kingdom, \email{neunhoef@mcs.st-and.ac.uk}
\and Cheryl E. Praeger \at
School of Mathematics and Statistics,
The University of Western Australia (M019),
35 Stirling Highway,
Crawley, WA 6009,
Australia; also affiliated with King 
Abdulaziz University, Jeddah, Saudi Arabia \email{cheryl.praeger@uwa.edu.au}
The second author was supported by Australian Research Council Federation Fellowship FF0776186.}
\journalname{Designs, Codes and Cryptography}
\begin{document}
\maketitle

\begin{abstract}
We classify the neighbour-transitive codes in John\-son graphs 
$J(v,k)$ of minimum distance at least $3$ which admit a neighbour-transitive
group of automorphisms that is an almost simple $2$-transitive 
group of degree $v$ and does not occur in an infinite family 
of $2$-transitive groups.  The result of 
this classification is a table of $22$ codes with these properties. 
Many have relatively large minimum distance in comparison to their 
length $v$  and number of code words. We construct an additional 
$5$ neighbour-transitive codes  with minimum 
distance $2$ admitting such a group. All $27$ codes are $t$-designs with $t$ at least $2$.
\end{abstract}

\keywords{Johnson graph \and error correcting code \and minimal
distance \and neighbour-transitive \and incidence transitive}
\subclass{05C25 \and 20B25 \and 94B60}

\section{Introduction}\label{intro}

This paper is a contribution to the study of error-correcting codes in 
the Johnson graphs such that all codewords are equivalent and also all 
code-neighbours are equivalent under symmetries of the code, that is to say, 
the study of \emph{neighbour-transitive codes}. Our approach is in the spirit of 
Delsarte's program \cite{Delsarte} to investigate completely regular codes in 
distance-regular graphs, and is a response to the disappointingly small 
numbers of such codes found over the years 
with good error-correcting properties (large minimum distance).
Delsarte \cite{Delsarte} in particular had asked about 
the existence of completely regular codes in Johnson graphs, and such 
codes have been studied by Meyerowitz~\cite{Meyerowitz1,Meyerowitz2} 
and Martin~\cite{Martin94,Martin98}. On the one hand, the neighbour-transitivity condition 
relaxes the stringent regularity conditions imposed for complete regularity, 
replacing them with conditions involving only  
codewords and their immediate neighbours. On the other hand the
regularity conditions for codewords and their neighbours are strengthened to
a local transitivity property.
 
The  \emph{Johnson graph} $J(v,k)$, based on a set $\V$ of $v$ elements 
called \emph{points}, where $2\leq k\leq v-2$, is  the graph whose vertex 
set is the set $\binom{\V}{k}$ of all $k$-subsets of $\V$, with 
edges being the unordered pairs $\{\ga,\ga'\}$ of $k$-subsets
such that $|\ga\cap\ga'|=k-1$.
The graph $J(v,k)$ admits the symmetric group $\Sym(\V)$ 
as a group of automorphisms, and if $k\ne v/2$ this is the full automorphism group. 
If $k=v/2$ the automorphism group $\Sym(\V)\times\langle\tau\rangle$ is twice as large, including in particular the complementing involutory map $\tau$ which maps each $k$-subset 
$\ga$ of $\V$ to its complement $\V\setminus\ga$. 

A code in $J(v,k)$ is a subset $\Gamma$ of the vertex set  $\binom{\V}{k}$,
and its automorphism group $A$ is the set-wise stabiliser of $\Gamma$ in 
$\Aut(J(v,k))$. Code-neighbours of $\Gamma$ are the vertices $\ga_1\not\in\Gamma$ 
that are joined by an edge to at least one codeword $\ga\in\Gamma$; $\Gamma$ is 
said to be \emph{neighbour-transitive} if $A$ is transitive on both $\Gamma$ and 
the set $\Gamma_1$ of code-neighbours; and more generally 
$\Gamma$ is called \emph{$G$-neighbour-transitive}, where $G\leq A$, if $G$ 
is transitive on both $\Gamma$ and $\Gamma_1$. If $k=v/2$ it is possible that 
$A\not\leq \Sym(\V)$ and that a code $\Ga$ is $A$-neighbour-transitive while the 
group $A\cap\Sym(\V)$ is not neighbour-transitive on $\Gamma$; this situation
will be addressed in \cite{NP}. We are concerned in this paper with the case
in which $\Ga$ is $(A\cap\Sym(\V))$-neighbour-transitive.

%
Neighbour-transitive codes $\Gamma$ contained in a Johnson graph
$J(v,k)$ were first studied by Liebler and the second author in
\cite{LP}. All such codes for which the group $G:=A\cap\Sym(\V)$ does
not act primitively on the underlying set $\V$ were explicitly described
in \cite[Theorem 1.1]{LP}.  There are two infinite families of examples for which 
$G$ is intransitive in its action on $\V$. If $G$ is transitive but 
imprimitive on $\V$, then the classification yields five infinite families
of examples together with a recursive construction of such codes.

The minimal distance of a code $\Gamma$ is the smallest distance  $\delta(\Gamma)$ in $J(v,k)$
between distinct codewords $\gamma_1, \gamma_2$, that is to say 
$\delta(\Gamma)$ is the smallest value attained by $k-|\gamma_1\cap\gamma_2|$.
In the case where $G$ is primitive on $\V$, the analysis in \cite{LP}
focuses on codes $\Gamma$ with $\delta(\Gamma)\geq2$. (see, for example, \cite[Chapter
7.4]{Cam})
It was shown in \cite[Theorem 1.2]{LP} that a $G$-neighbour-transitive
code $\Ga$ with $\delta(\Ga)\geq3$ has the following property, called
\emph{$G$-strong incidence transitivity}: the group $G$ is transitive
on $\Ga$ and, for $\ga\in\Ga$, $G_\ga$ is transitive on the set of
pairs $(\u,\u')$ with $\u\in\ga, \u'\in \V\setminus\ga$. In the case where
$\delta(\Ga)=2$, the same theorem shows that $G$-strong incidence transitivity
is equivalent to $G$-transitivity on pairs $(\gamma,\gamma_1)$ with $\gamma\in\Ga, 
\gamma_1\in\Ga_1$, a property strictly stronger 
than $G$-neighbour transitivity, see \cite[Remark 1.5]{LP}. The major
signifiance of  \cite[Theorem 1.2]{LP} for this paper, however, is its final assertion: 
namely that, if $G$ is primitive on $\V$, then $G$-strong incidence transitivity
implies that $G$ is $2$-transitive on $\V$.  Since the finite $2$-transitive 
permutation groups are known explicitly as a consequence of the classification of 
the finite simple groups (see, for example, \cite{Cam}), this result offers a way forward to a possible 
classification of the $G$-strongly incidence transitive codes in $J(v,k)$.
Such groups $G$ are either of affine type with an elementary abelian normal subgroup
acting regularly on $\V$, or  almost simple, that is  $T\leq G\leq \Aut(T)$ for some finite nonabelian simple group $T$.

In this paper we deal with the cases in which $G$ is a \emph{sporadic almost simple
$2$-transitive group} on $\V$ in the sense that $G$ does not lie in an 
infinite family of almost simple $2$-transitive groups. In Subsection~\ref{sub:summary}
we give a summary of progress on the classification  of $G$-strongly incidence 
transitive codes in $J(v,k)$ for the other types of $2$-transitive permutation groups.
By, for example, \cite[Chapter
7.4]{Cam}, the sporadic almost simple $2$-transitive groups $G$ of degree $v$ are the Mathieu groups $M_v$
for $v\in\{11,12,22,23,24\}$ and $\Aut({\rm M}_{22})$ with $v=22$; ${\rm M}_{11}$ with $v=12$;
$L_2(11)$ with $v=11$; $A_7$ with $v=15$; the Higman-Sims group ${\rm HS}$ with
$v=176$; and Conway's third group ${\rm Co}_3$ with $v=276$. As our main result Theorem~\ref{mathieu}
shows, each of these groups provides at least one neighbour-transitive code
$\Gamma$. Note that $\PGaL(2,8) \cong L_2(8).3 \cong \Ree(3)$ is a bit
ambiguous with respect to the notion ``sporadic almost simple
$2$-transitive group''. As Ree group it is a member of an infinite family,
but it is exceptional in various ways, for example it is the only
$2$-transitive almost simple group whose socle is not $2$-transitive.
However, as explained in Section~\ref{PSL28} it does not provide a
neighbour-transitive code anyway.


 \begin{center}
\begin{table}
\hspace*{-6mm}\begin{tabular}{rlcccrrll}\hline
Line & $G$ &$v$  &$k$& $\delta(\Gamma)$&$|\Ga|$&$A_2$&$\Ga$ & $G_\ga$   \\ \hline 
1& $L_2(11) $ &$11$ &$5$& $3$&11&12& $2$-$(11,5,2)$ biplane & $A_5$   \\ 
2& $A_{7}$ &$15$&$7$&  $4$     &15&16&planes of $\PG(3,2)$& $L_2(7)$\\
3&${\rm M}_{11}$ &$12$ &$6$&$3$&22&24&totals&$A_6$\\
4&${\rm M}_{22}$ &$22$ &$6$&$4$&77&1024&$3$-$(22,6,1)$ design &$2^4:A_6$ \\
5&                  & &$7$&$4$  &176&1024&heptads   &$A_7$\\
6&                &     &$8$&$4$   &330&1024&octads &$2^3:L_3(2)$\\
7&               &       &  $10$&$4$   &616&1024&decads
                 &${\rm M}_{10}\cong A_6\cdot 2_3$\\
8&${\rm M}_{22}.2$ &$22$ &$6$&$4$&77&1024&$3$-$(22,6,1)$ design &$2^4:S_6$ \\
9&                    & &$7$&$3$  &352&6941?&heptads   &$A_7$\\
10&                  &     &$8$&$4$   &330&1024&octads &$2\times 2^3:L_3(2)$\\
11&                 &       &  $10$&$4$   &616&1024&decads&$A_6\cdot(2^2)$\\
12&${\rm M}_{23}$&$23$ &$7$& $4$     &253&2048&$4$-$(23,7,1)$ design &$2^4:A_7$ \\
13&                   &     &$8$&$4$ &506&2048&octads&$A_8$\\
14&            &     &$11$& $4$      &1288&2048&endecads&${\rm M}_{11}$\\
15&${\rm M}_{24}$&$24$ &$8$& $4$  &759&4096&$5$-$(24,8,1)$ design &$2^4:A_8$ \\
16&                &     &$12$&$4$&2576&4096&duum&${\rm M}_{12}$\\
17&${\rm HS}$&$176$ &$50$& $36$   &176&?&$2$-$(176,50,14)$&${\rm U}_3(5):2$  \\
18&          &$   $ &$56$& $32$   &1100&?&$2$-$(176,56,110)$
                   &$L_3(4).2$  \\
19&${\rm Co}_3$&$276$ &$6$& $3$ &708400&?&$2$-$(276,6,280)$
                &$3_+^{1+4}:4S_6$  \\
20&		&     &$36$& $24$&170775&?&$2$-$(276,36,2835)$
                &$2^{.} {\rm Sp}_6(2)$  \\
21& 		&     &$100$&$50$&11178&?&$2$-$(276,100,1458)$ 
                &${\rm HS}$  \\
22&		&     &$126$&$36$ &655776&?&$2$-$(276,126,136080)$
                &${\rm U}_3(5):S_3$  \\
\hline
23& $A_{7}$ &$15$ &$3$& $2$     &35&1024&lines of $\PG(3,2)$ & $(A_3\times A_4).2$ \\ 
24&${\rm M}_{11}$ &$11$ &$5$&$2$&66&72&$4$-$(11,5,1)$ design  & $S_5$ \\ 
25&${\rm M}_{11}$ &$12$ &$6$&$2$&110&144&halves of quadrisect.&$3^2:Q_8$\\
26&${\rm M}_{12}$ &$12$ &$6$&$2$&132&144&$5$-$(12,6,1)$ design& $A_6.2$ \\ 
27&${\rm M}_{24}$  &$24$  &$12$&$2$&35420&344308?&$5$-$(24,12,660)$ &
   $2^6:3.(S_3 \times S_3)$\\
 \hline
&&&
 \end{tabular}\hspace*{-6mm}
\caption{Sporadic 2-transitive Examples, see also \cite{NTCodes}.}
\label{tbl-sporad}
 \end{table}
 \end{center}

\begin{theorem}\label{mathieu} 
Let $G$ be one of the sporadic $2$-transitive groups on a set $\V$
of size $v$ as above, let $2\leq k\leq\frac{|\V|}{2}$,
and suppose that $\Ga\subset\binom{\V}{k}$ is $G$-neighbour-transitive with $\delta(\Ga)\geq3$.
Then $G, v, k, \delta(\Gamma)$, and $\ga\in\Ga$ are as in one of the Lines
$1$--$22$ of Table~{\rm\ref{tbl-sporad}} (above the horizontal line). 
Moreover the codes in lines $3$ and $16$ are self-complementary 
and their full automorphism group is $G \times \left<\tau\right>$.
\end{theorem}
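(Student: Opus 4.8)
The plan is to reduce the classification to a finite, purely group-theoretic search and then carry it out group by group. By \cite[Theorem 1.2]{LP}, any $G$-neighbour-transitive code $\Ga$ with $\de(\Ga)\ge 3$ is $G$-strongly incidence transitive; conversely $G$-strong incidence transitivity implies $G$-neighbour-transitivity (\cite[Remark 1.5]{LP}), so for $\de(\Ga)\ge 3$ the two conditions are equivalent and it suffices to classify the $G$-strongly incidence transitive codes and then discard those with $\de(\Ga)<3$. I would first reformulate strong incidence transitivity as a condition on the stabiliser $H:=G_\ga$ of a single codeword: $H$ must have exactly two orbits on $\V$, namely $\ga$ (of size $k$) and $\V\setminus\ga$ (of size $v-k$), and $H$ must be transitive on $\ga\times(\V\setminus\ga)$, the latter being equivalent to $H_\u$ being transitive on $\V\setminus\ga$ for one (hence every) $\u\in\ga$. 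Thus a codeword is precisely an orbit of such a \emph{two-orbit, product-transitive} subgroup $H$ that is moreover the full set-wise stabiliser of that orbit, and the code is $\Ga=\ga^G$ with $|\Ga|=[G:H]$.

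The classification then reduces to enumerating, for each of the finitely many sporadic almost simple $2$-transitive groups $G$ above, the conjugacy classes of two-orbit product-transitive subgroups $H\le G$ whose smaller orbit has size $k\le v/2$. For the groups of degree $v\le 24$ ($L_2(11)$, $A_7$, the Mathieu groups and ${\rm M}_{22}.2$) this is a direct computation in \textsf{GAP}: one builds up $G$-orbit representatives of $k$-subsets of $\V$ incrementally in $k$, and for each representative $\ga$ tests whether $G_\ga$ has two orbits and is product-transitive. For the two large-degree cases ${\rm HS}$ ($v=176$) and ${\rm Co}_3$ ($v=276$) enumerating subsets is infeasible, so I would instead run through the known conjugacy classes of subgroups (descending from the maximal subgroups), compute the orbit lengths of each on $\V$, and retain those with exactly two orbits, checking product-transitivity on the survivors; here $H$ is typically a maximal subgroup whose action on $\V$ happens to have just two orbits (for instance a class of ${\rm U}_3(5):2$ in ${\rm HS}$ that is \emph{not} a point stabiliser, with orbits of sizes $50$ and $126$). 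For each surviving candidate I would then read off $\de(\Ga)=k-\max|\ga\cap\ga'|$ over $\ga'\in\Ga\setminus\{\ga\}$ from the $G_\ga$-orbits on $\Ga$, keep exactly those with $\de(\Ga)\ge 3$, and match the parameters against Lines $1$--$22$. The main obstacle is the completeness and feasibility of the subgroup enumeration for ${\rm HS}$ and ${\rm Co}_3$ together with reliably computing the minimum distances of the large codes.

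For the ``moreover'' statement, consider Lines $3$ ($G={\rm M}_{11}$, $v=12$, $H=G_\ga\cong A_6$) and $16$ ($G={\rm M}_{24}$, $v=24$, $H=G_\ga\cong {\rm M}_{12}$), where $k=v/2$ so $\Aut(J(v,k))=\Sym(\V)\times\la\tau\ra$. To prove self-complementarity I would use that $\ga$ and $\V\setminus\ga$ are exactly the two orbits of $H$, so $H=G_\ga=G_{\V\setminus\ga}$. In each case $H$ is non-maximal with $N_G(H)>H$ (namely $N_{{\rm M}_{11}}(A_6)={\rm M}_{10}\cong A_6.2$ and $N_{{\rm M}_{24}}({\rm M}_{12})\cong {\rm M}_{12}:2$), so any $g\in N_G(H)\setminus H$ normalises $H$ and hence permutes its two orbits; since $g\notin G_\ga$ it cannot fix $\ga$ and therefore swaps $\ga$ with $\V\setminus\ga$. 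Thus $\V\setminus\ga\in\ga^G=\Ga$, i.e.\ $\Ga$ is self-complementary and $\tau\in A:=\Aut(\Ga)$. (For Line $16$ one may alternatively note that $\Ga$ is the set of all dodecads, manifestly closed under complementation.)

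Finally, to identify $A=G\times\la\tau\ra$, write $\bar A:=A\cap\Sym(\V)$. As $\tau$ is central of order $2$ and $\tau\notin\Sym(\V)$, self-complementarity gives $A=\bar A\times\la\tau\ra$, so it remains to show $\bar A=G$. Here $G\le\bar A\le\Sym(\V)$ and $\bar A$ stabilises $\Ga$, so I would determine the overgroups of $G$ in $\Sym(\V)$ and eliminate the proper ones. For Line $16$, $G={\rm M}_{24}$ is maximal in $A_{24}$, so the only overgroups in $S_{24}$ are $A_{24}$ and $S_{24}$, neither of which fixes the proper nonempty subset $\Ga$ of $\binom{\V}{12}$; hence $\bar A=G$. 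For Line $3$ the only overgroups of the transitive ${\rm M}_{11}$ in $S_{12}$ are ${\rm M}_{12}$, $A_{12}$ and $S_{12}$; again $A_{12}$ and $S_{12}$ cannot fix the $22$-element set $\Ga$, and ${\rm M}_{12}$ does not stabilise $\Ga$ either, since otherwise $\Ga$ would be an ${\rm M}_{12}$-strongly incidence transitive code with $v=12$, $k=6$, $\de=3$, contradicting the (already established) completeness of the ${\rm M}_{12}$ part of the classification. Thus $\bar A=G$ and $A=G\times\la\tau\ra$ in both cases.
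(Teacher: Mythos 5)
Your proposal is correct and its core follows the same route as the paper: reduce to $G$-strong incidence transitivity via \cite[Theorem 1.2]{LP}, translate that into the condition that $G_\ga$ has exactly two orbits $\ga$ and $\V\setminus\ga$ and is transitive on $\ga\times(\V\setminus\ga)$, and then classify such stabilisers group by group, largely by computer. Two points differ genuinely. First, for the small-degree groups you enumerate $G$-orbit representatives of $k$-subsets directly, whereas the paper uses a single uniform mechanism (its Lemma~\ref{thelemma}): descend through the subgroup lattice from the maximal subgroups, keeping transitive subgroups for further descent and testing intransitive ones for the two-orbit, product-transitive property. Both are valid; the paper's descent is what it also uses for ${\rm HS}$ and ${\rm Co}_3$ (exactly as you propose there), and it additionally yields the Atlas-based human-readable arguments (e.g.\ the divisibility of $|G_\ga|$ by $k(v-k)$, Lemma~\ref{obs} on normal subgroups of transitive overgroups, and the factorization argument for ${\rm HS}$). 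Your subset enumeration is feasible for $v\leq 24$ but is the less scalable option. Second, and more substantially, your treatment of the ``moreover'' clause is a genuine improvement on what the paper records: the paper simply states that self-complementarity was checked computationally and asserts the automorphism group, while you prove self-complementarity cleanly (an element of $N_G(G_\ga)\setminus G_\ga$ --- available since $N_{{\rm M}_{11}}(A_6)=M_{10}$ and $N_{{\rm M}_{24}}({\rm M}_{12})={\rm M}_{12}{:}2$ --- must swap the two $G_\ga$-orbits) and then pin down $\Aut(\Ga)=G\times\la\tau\ra$ by listing the overgroups of $G$ in $\Sym(\V)$ and discarding them; for Line~3 one can even shortcut your final step by noting that ${\rm M}_{12}$ is transitive on all $132$ hexads and so cannot stabilise the $22$-element orbit $\Ga$. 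These normaliser and overgroup facts are all correct, so your argument supplies a proof of a claim the paper leaves to computation.
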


The examples are all linked to interesting geometrical or combinatorial 
configurations, and in each case it can be helpful to view $\Ga$ as the block 
set of a design based on $\V$. As mentioned in
Section~\ref{HammingSect}, our codes can be interpreted as non-linear
binary codes. 
We compared their minimum distance, 
with the known bounds
for the Hamming minimum distance for binary codes from
\cite{NLCodes} (for non-linear codes) and \cite{CodeTables} (for
linear codes). Nearly all of our examples have rather 
large minimum distance. We have added the upper bound from
\cite{NLCodes} (if known) in the column labelled with $A_2$ in
Table~{\rm\ref{tbl-sporad}}.
Thus, for example the code in Line~3 has length
$12$, Hamming minimum distance $6=2\delta(\Gamma)$ and contains $22$
code words which is very close to the upper bound of $24$ for such
codes. The code in Line~7 has length $22$, Hamming minimum
distance $8$ and contains $616$ code words; here the upper bound is
$1024$. The code in Line~16 has length $24$, Hamming minimum distance
$8$ and contains $2576$ codewords (note $2^{11} < 2576 < 2^{12}$). The upper bound
for the number of code words for a non-linear code is $4096=2^{12}$. For linear
binary codes of length $24$ and dimension $11$ or $12$, Hamming
minimum distance $8$ is best possible, so again, our code is very
close to this.

We make some comments on this classification and our approach to proving it.

\subsection{Summary of the concepts}\label{summary}
The codes we study are subsets $\Ga\subseteq\binom{\V}{k}$ of $k$-subsets of $\V$.
The \emph{automorphism group} $\Aut(\Ga)$ of  
$\Ga$ is the set-wise stabiliser of $\Ga$ in the automorphism group $\Aut(J(v,k))$ of $J(v,k)$, and the latter group is $\Sym(\V)$ if $k\ne v/2$ and $\Sym(\V)\times\langle\tau\rangle$ if $k=v/2$ with $\tau$ the complementing map 
which takes each $k$-subset of $\V$ to its complement.
By a \emph{neighbour of $\Ga$} we mean a $k$-subset $\ga_1$ of 
$\V$ that is not a codeword but satisfies $|\ga_1\cap\ga|=k-1$ for some 
codeword $\ga\in\Ga$, that is to say, the distance $d(\ga,\ga_1)$ between
$\ga$ and $\ga_1$ in $J(v,k)$ is 1.
%
Thus provided the minimal distance $\de(\Ga)>1$, 
all vertices adjacent to a codeword are neighbours, and in particular $\Ga$ is a proper 
subset of $\binom{\V}{k}$. For $G\leq\Aut(\Ga)$, we say that $\Ga$ is 
\emph{$G$-neighbour-transitive} if $G$ is transitive on  both
$\Ga$ and the set $\Ga_1$ of neighbours of $\Ga$. As discussed above we will throughout this paper assume that $G\leq\Aut(\Ga)\cap\Sym(\V)$.


For any code $\Ga\subset\binom{\V}{k}$, 
the set of complements $\Ga':=\{\V\setminus\ga\,|\,\ga\in\Ga\}$ is a code in 
$J(v,v-k)$ with neighbour set $\{\V\setminus\ga\,|\,\ga\in\Ga_1\}$. Moreover $\de(\Ga')=\de(\Ga)$, 
and properties such as neighbour-transitivity, or strong incidence-transitivity introduced 
below, hold for $\Ga$ if and only if they hold for $\Ga'$. 
Thus the assumption $k\leq v/2$ is not restrictive at all in our investigation.

\subsection{Strong incidence transitivity}
Recall that a $G$-neighbour-transitive
code $\Ga$, where $G\leq\Sym(\V)$,  is $G$-strongly incidence transitive if $G$ is transitive
on $\Ga$ and, for $\ga\in\Ga$, $G_\ga$ is transitive on the set of
pairs $(\u,\u')$ with $\u\in\ga, \u'\in \V\setminus\ga$. By 
\cite[Theorem 1.2]{LP}, each $G$-strongly incidence transitive code $\Ga$ has $\delta(\Ga)\geq2$, and 
if $\Ga$ is $G$-neighbour-transitive with
$\delta(\Ga)\geq3$ then $\Ga$ is $G$-strongly incidence transitive.
Our approach to proving Theorem~\ref{mathieu} is to
embark on the stronger classification problem of $G$-strongly incidence
transitive codes and, 
having done this, to check if any of the examples
have minimum distance 2. We prove (see
Section~\ref{sect:organisation}):

\begin{proposition}\label{mathieu2} 
Let $G, v, k$ be as in Theorem~\ref{mathieu} 
and suppose that $\Ga\subset\binom{\V}{k}$ is $G$-strongly incidence
transitive with $\delta(\Ga)\geq2$.
Then $G, v, k$, $\delta(\Gamma)$, and $\ga\in\Ga$ are as in one of the lines
of Table~{\rm\ref{tbl-sporad}}. 
Moreover the codes in lines $3$, $16$ and $25$--$27$ are 
self-complementary 
and their full automorphism group is $G \times \left<\tau\right>$.
\end{proposition}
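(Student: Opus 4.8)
The plan is to prove the equivalent, purely group-theoretic reformulation and then read the table off from it. Since $G$ is transitive on $\Ga$, fix a codeword $\ga\in\Ga$ and put $H:=G_\ga$, so that $\Ga=\ga^G$ and $|\Ga|=|G:H|$. By definition, $G$-strong incidence transitivity means precisely that $H$ is transitive on $\ga\times(\V\setminus\ga)$; in particular $H$ has exactly two orbits on $\V$, namely $\ga$ (of size $k$) and $\V\setminus\ga$ (of size $v-k$). Thus the classification reduces to determining, up to $G$-conjugacy, all subsets $\ga$ with $2\le k\le v/2$ for which the setwise stabiliser $H=G_\ga$ acts transitively on $\ga\times(\V\setminus\ga)$. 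Note that this transitivity forces $k(v-k)$ to divide $|H|$, so $H$ is a comparatively large subgroup of $G$; and since $G$ is $2$-transitive, $\ga^G$ is automatically a $2$-design, which will account for the design column of Table~\ref{tbl-sporad} and for $t\ge2$ throughout.

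First I would convert this into a finite search over subgroups rather than over subsets. A subgroup $H$ with just two orbits on $\V$ is intransitive, and the product-transitivity condition makes it large; using the known subgroup structure of the sporadic $2$-transitive groups — the maximal subgroups from the Atlas together with the relevant parts of their subgroup lattices, computed in \textsc{Gap} — the candidates for $H$ form a short, explicit list for each $G$, in fact maximal subgroups or explicitly describable subgroups sitting just below maximal ones. For each candidate I would compute its orbits on $\V$, keep those with exactly two orbits, and test transitivity on the product directly. I must also check that each such $H$ is the \emph{full} stabiliser $G_\ga$, not a proper subgroup fixing the same orbit $\ga$, so that no code is overlooked or double-counted; some care is needed here, as a priori $G_\ga$ need not be maximal in $G$.

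For each surviving subset I would record $k$, $|\Ga|=|G:H|$ and $H=G_\ga$, compute the minimum distance $\delta(\Ga)=k-\max\{|\ga\cap\ga^{g}|:g\in G,\ \ga^g\ne\ga\}$ from the $H$-orbits on the translates of $\ga$, and identify the associated design; comparison with the entries of Table~\ref{tbl-sporad} then gives the first assertion. The hard part is the large-degree cases $\mathrm{HS}$ ($v=176$) and $\mathrm{Co}_3$ ($v=276$), where enumerating $k$-subsets is hopeless and everything must be extracted from the maximal-subgroup data and orbit computations; verifying the product-transitivity and computing the intersection numbers there, while being sure the candidate list is complete, is the computational heart of the proof.

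Finally, for the self-complementary claim I would restrict to the lines with $k=v/2$, which are exactly $3$, $16$ and $25$--$27$. For each I check that $\V\setminus\ga$ lies in $\ga^G$, so the complementing map $\tau$ sends $\Ga$ to itself; hence $\tau\in\Aut(\Ga)$, and since $\tau$ is a central involution of $\Aut(J(v,v/2))=\Sym(\V)\times\langle\tau\rangle$ lying outside $\Sym(\V)$, we get $G\times\langle\tau\rangle\le\Aut(\Ga)$ and $\Aut(\Ga)=(\Aut(\Ga)\cap\Sym(\V))\times\langle\tau\rangle$. It remains to prove $A_0:=\Aut(\Ga)\cap\Sym(\V)=G$. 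As $A_0\ge G$ it is $2$-transitive, hence primitive, on $\V$; for $G\in\{M_{12},M_{24}\}$ the group $G$ is maximal in $A_v$ and self-normalising in $\Sym(\V)$, forcing $A_0\in\{G,A_v,\Sym(\V)\}$, and the last two are excluded because $A_v$ is already transitive on $\binom{\V}{k}$ and so cannot fix the proper nonempty subset $\Ga$. The cases needing extra attention are lines $3$ and $25$, where $G=M_{11}$ of degree $12$ is not maximal in $A_{12}$ but lies in $M_{12}$; there I would verify directly that $M_{12}$ does not preserve $\Ga$, which, together with the transitivity of $A_{12}$ on $\binom{\V}{6}$, again yields $A_0=M_{11}$.
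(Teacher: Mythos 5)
Your proposal is correct and follows essentially the same route as the paper: reduce strong incidence transitivity to the condition that $H=G_\ga$ has exactly two orbits $\ga$, $\V\setminus\ga$ and is transitive on $\ga\times(\V\setminus\ga)$, then search the subgroup lattice of each sporadic $2$-transitive $G$ (Atlas maximal subgroups plus \textsf{GAP}) for such $H$, verifying that $H$ is the full setwise stabiliser and computing $\delta(\Ga)$ from the orbit data; the paper formalises the completeness of this descent in its Lemma~\ref{thelemma}, which observes that every proper overgroup of $G_\ga$ in a maximal chain up to $G$ must be transitive on $\V$. Your explicit primitivity/maximality argument for $\Aut(\Ga)\cap\Sym(\V)=G$ in the $k=v/2$ lines is a nice supplement, as the paper settles the self-complementarity and full automorphism group claims purely by computation.
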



\subsection{Codes in binary Hamming graphs}
\label{HammingSect}
The binary Hamming graph $H(v,2)$ has as vertices the 
ordered $v$-tuples with entries from $\{0,1\}$, and 
edges those pairs of $v$-tuples which agree in all but one entry.
If we write $\V=\{1,2,\dots,v\}$, then each vertex $\ga$ of the 
Johnson graph $J(v,k)$ can be identified with the binary 
$v$-tuple with $i$-entry 1 if and only if $i\in\ga$. 
In this way $J(v,k)$ is identified with the set of
weight $k$ vertices of $H(v,2)$, and each code $\Ga$ in
$J(v,k)$ is identified with a constant weight code in $H(v,2)$. 
Vertices at distance $d$ in $J(v,k)$ correspond to vertices 
in $H(v,2)$ at distance $2d$ so the minimum distance of $\Ga$, 
viewed as a code in $H(v,2)$, is $2\delta(\Ga)$.
Moreover $\Aut(\Ga)$, in its action on entries, is admitted 
by the code in $H(v,2)$, so neighbour-transitive codes in $J(v,k)$ 
yield codes in $H(v,2)$ with groups transitive on codewords. However 
the neighbours of $\Ga$ in $H(v,2)$ have weights $k\pm1$ and we 
usually have no information as to transitivity on code neighbours. 
  
\subsection{A computational approach}
\label{sect:computational}
If we fix a group $G$ and its transitive action on a set of $v$
points, that is, if we are given $G$ as a permutation group, we can
use the following computational approach to find all possible $k$,
$\gamma$, $\Ga = \gamma G$ and thus $G_\gamma$. We use:

\begin{lemma}
\label{thelemma}
Let $G$ be a transitive group on a set $\V$ of size $v$, let $2 \le k
\le \frac{|\V|}{2}$, and suppose that $G$ acts transitively on
$\Ga\subset\binom{\V}{k}$ and $\Ga$ is $G$-strongly incidence-transitive. 
Then there is an $\ell \ge 1$ and
a chain of subgroups
\[ G_\gamma = H_0 < H_1 < \cdots < H_\ell = G \]
such that each $H_i$ is a maximal subgroup in $H_{i+1}$ for $0 \le i <
\ell$, all $H_i$ with $1 \le i \le \ell$ are transitive on $\V$,
$H_0$ has exactly two orbits $\gamma$ and $\V \setminus \gamma$ on $\V$,
and $G_\gamma$ is transitive on $\gamma \times (V \setminus \gamma)$.
\end{lemma}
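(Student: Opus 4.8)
The plan is to produce the required chain by taking a maximal (unrefinable) chain of subgroups from $G_\gamma$ up to $G$, and then to verify the orbit conditions, most of which fall out directly from strong incidence transitivity. I would set $H_0 := G_\gamma$ throughout.

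First I would dispose of the two conditions that are essentially definitional. By hypothesis $\Ga$ is $G$-strongly incidence transitive, so $H_0 = G_\gamma$ is transitive on $\gamma \times (\V\setminus\gamma)$; this is exactly the final assertion. Projecting this transitive action onto each coordinate then shows that $H_0$ is transitive on $\gamma$ and on $\V\setminus\gamma$ separately (given $u_1,u_2\in\gamma$ and any $u'\in\V\setminus\gamma$, some element of $H_0$ carries the pair $(u_1,u')$ to $(u_2,u')$, and symmetrically for the second coordinate). Since $H_0$ fixes $\gamma$ setwise and the constraint $2\le k\le v/2$ forces both $\gamma$ and $\V\setminus\gamma$ to be nonempty, $H_0$ has exactly the two orbits $\gamma$ and $\V\setminus\gamma$ on $\V$.

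Next I would observe that $G_\gamma<G$ strictly: $G$ is transitive on $\V$ whereas $H_0$ is not, so $H_0\neq G$ and hence any chain from $H_0$ to $G$ has length $\ell\ge1$. I would then build the chain greedily from the bottom: given $H_i<G$, choose $H_{i+1}$ to be a minimal element of the finite, nonempty set of subgroups strictly containing $H_i$; by minimality nothing lies strictly between $H_i$ and $H_{i+1}$, so $H_i$ is maximal in $H_{i+1}$. Iterating and using finiteness of $G$ terminates at $H_\ell=G$, yielding the desired maximal chain $H_0<H_1<\cdots<H_\ell=G$.

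The one real point is the transitivity of the $H_i$ for $i\ge1$, and it suffices to treat $H_1$, since $H_1\le H_i$ for every $i\ge1$ means each such $H_i$ contains a transitive subgroup and is therefore itself transitive. Because $H_0\le H_1$, every $H_1$-orbit on $\V$ is a union of $H_0$-orbits, so the orbit set of $H_1$ is either $\{\gamma,\V\setminus\gamma\}$ or the single orbit $\V$. In the former case $H_1$ fixes $\gamma$ setwise, whence $H_1\le\Stab_G(\gamma)=G_\gamma=H_0$, contradicting $H_0<H_1$; so $H_1$ is transitive on $\V$. The main (and essentially only) obstacle is this orbit-fusion step: once one notices that any proper overgroup of $G_\gamma$ cannot preserve the partition $\{\gamma,\V\setminus\gamma\}$, the remainder is routine bookkeeping about refining subgroup chains in a finite group.
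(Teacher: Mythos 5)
Your proof is correct and follows essentially the same route as the paper's: both hinge on the observation that $G_\gamma$ is the full setwise stabiliser of $\gamma$, so any subgroup strictly containing it cannot preserve the partition $\{\gamma,\V\setminus\gamma\}$ and must therefore be transitive on $\V$. The only cosmetic difference is that you reduce the transitivity check to $H_1$ alone and let the larger $H_i$ inherit it, whereas the paper applies the same argument uniformly to every $H_i$ with $i\ge 1$.
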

\begin{proof}
By definition of $G$-strong incidence-transitivity we get that
$G_\gamma = H_0$ has exactly two orbits $\gamma$ and $\V \setminus
\gamma$, is transitive on $\gamma \times (\V \setminus \gamma)$ and
is the set-wise stabiliser of $\gamma$ in $G$. Therefore, in any
maximal chain $G_\gamma = H_0 < H_1 < \cdots < H_\ell = G$ all groups
$H_i$ with $1 \le i \le \ell$ must be transitive on $\V$ because
otherwise they would fix the set $\gamma$.
\end{proof}
This lemma allows to look for the above situation by looking at
subgroups of $G$. We start with a list of representatives of the
conjugacy classes of maximal subgroups of $G$. For each such $H$ on
the list, we compute the $H$-orbits on $\V$. If there are more than $2$
we discard $H$. If
there are exactly two orbits $\gamma$ and $\V \setminus \gamma$ 
with $|\gamma| \le |V \setminus \gamma|$, we check whether or not
$H$ acts transitively on $\gamma \times (V \setminus \gamma)$.
If not, we discard $H$.
Otherwise we enumerate the $H$-orbits on 
$|\gamma|$-subsets, and in so doing we check whether $H$ is the
set-wise stabiliser of $\gamma$. (In the first stage of this 
process $H$ is maximal in $G$ and then $H$ must be the full set-wise 
stabiliser of each of its orbits.) If so, we check $\delta(\Ga)$ and if it is at
least $2$, we have found an interesting $G$-strongly 
incidence-transitive code. On the other hand if $H$ acts transitively on $\V$, 
we append a list of
representatives of the conjugacy classes of maximal subgroups of $H$ to our candidate list
and consider the next subgroup on the list.

Since our groups for the classification are explicitly given as
permutation groups on not too many points, we can either determine 
representatives for the conjugacy classes of maximal subgroups
by explicit computation or by looking them up in the Atlas of Finite
Group Representations (see \cite{WWWAtlas}).

This approach terminates since we are dealing with finite groups and
it will classify all $G$-strongly incidence-transitive codes $\Ga$
because of Lemma~\ref{thelemma}. It is enough to consider one
representative in each $G$-conjugacy class of subgroups since any
$G$-conjugate of $G_\gamma$ will produce an isomorphic code.

We have actually run this algorithm to completion on all of the
sporadic almost-simple 2-transitive groups mentioned above and
Table~\ref{tbl-sporad} contains all the codes found. Thus we have
proved Proposition~\ref{mathieu2} computationally. However, where
possible, we will give a human-readable proof of our classification in
the next section, since the mathematical arguments enhance the
understanding of the beautiful geometric and group-theoretic
structures underlying these codes. In a few cases, however, we will
refer to the computations to finish off the argument. The values for
the minimum distance $\delta(\Ga)$ have all been determined
computationally. We provide input for the \textsf{GAP} computer
algebra system (see \cite{GAP}) to reproduce all codes found
and to verify our computations on the web page \cite{NTCodes}.

\subsection{Summary of progress with classification in the $2$-transitive case.}\label{sub:summary} 
Suppose that $\Ga$ is a $G$-strongly incidence transitive code in $J(v,k)$ with $\delta(\Ga)\geq2$,
and $G\leq \Sym(\V)$ such that $G$ is a 2-transitive permutation group on $\V$, and $G$ is not one of the sporadic $2$-transitive groups treated in this paper. 
We divide such $2$-transitive 
groups into three broad families (see  \cite[Chapter 7.3 and 7.4]{Cam}):

\begin{enumerate}
 \item[(a)] The affine $2$-transitive groups: $G$ is a group of affine
transformations of a finite vector space $V$ and $G$ contains the translation group 
as a normal subgroup acting regularly on $V$ which we can identify with the underlying set $\V$. 
 \item[(b)] The symplectic groups: $G=\Sp(2n,2)$ acts 2-transitively on one
of two families of quadratic forms which polarise to the symplectic form preserved by $G$.
\item[(c)]  All other infinite families of almost simple $2$-transitive groups.
\end{enumerate}

\noindent
The affine 2-transitive groups are analysed in \cite[Section 6]{LP} and it is shown 
in \cite[Propositions 6.1 and 6.6]{LP} that, for a codeword 
$\ga$ viewed as a subset of $\V$, either (i) $\gamma$ is an affine subspace or complement 
of an affine subspace, or (ii) $q\in\{4,16\}$ and either $\V$ is 1-dimensional with 
$\ga$ a Baer subline, or $\V$ has dimension at least 2 and $\ga$ is a subset of   
class $[0,\sqrt{q}, q]_1$ (that is to say, each affine line meets $\ga$ in $0, \sqrt{q}$ or $q$ points).
For the last case, \cite[Example 6.7]{LP} provides an example in 2-dimensions with $q=4$, namely 
the famous 2-transitive hyperoval $H$ with $|\ga|=6$. N. Durante \cite{Dur} classified geometrically
all subsets of affine points  of class $(0,\sqrt{q}, q)_1$ (see Propositions 2.3, 3.6, 
Corollary 2.4, Theorems 3.13, 3.15 of \cite{Dur}) and used this to classify all possible examples 
with the required symmetry properties \cite[Theorem 3.18]{Dur}: for $q=4$ the additional 
possibilities for $\ga$ are cylinders 
with base the 2-transitive hyperoval $H$, or are  unions of two parallel planes; for $q=16$, the 
additional examples $\ga$ are unions of four parallel planes with secant lines meeting each of 
them in a Baer sub-line.

The symplectic groups are not treated in \cite{LP} and are remarked there as being an open case. 
Some recent work is beginning on them by a PhD student of the second author.

All the other infinite families of almost simple $2$-transitive permutation groups $G$ are
considered in \cite[Sections 7 and 8]{LP}. Corresponding to the infinite families of rank 1 Lie type groups
there are two infinite families of strongly incidence transitive codes and one sporadic example 
\cite[Propositions 7.2]{LP}: namely Baer sublines of the projective 
line ${\rm PG}(1,q_0^2)$ for groups with socle ${\rm PSL}(2,q_0^2)$, blocks of the classical unital 
for the $3$-dimensional unitary groups ${\rm PSU}(3,q)$, and bases for the groups 
${\rm PGU}(3,3)$ (with $v=28, k=6$). The only other examples for the third class of groups
come from the projective groups $G$, where ${\rm PSL}(n,q)\leq G\leq {\rm P}\Ga{\rm L}(n,q)$, 
with $n\geq3$ and $v=\frac{q^n-1}{q-1}$ in the natural action on points
of the projective space ${\rm PG}(n-1,q)$. By \cite[Proposition 7.4]{LP}, either (i) the code consists 
of subspaces or their complements of some fixed dimension, or (ii) each codeword is a subset of 
points of  $\PG(n-1,q)$ of class $[0,x,q+1]_1$ (defined as above), where $x=2$, or $q=q_0^2$ and $x=q_0+1$.
In this case, Durante \cite[Theorem 3.2]{Dur} drew together results about subsets of 
$\PG(n-1,q)$ of class $[0,x,q+1]_1$, and showed in \cite[Theorem 3.3]{Dur} that no such subsets, apart from subspaces 
and their complements, have the symmetry property required for strongly incidence-transitive codes.



\section{Proof of Proposition~\ref{mathieu2}}\label{sect:organisation}

Suppose that $\Ga$ is a subset of $\binom{\V}{k}$ with $\delta(\Ga)\geq2$, 
where $2\leq k\leq
\frac{|\V|}{2}=\frac{v}{2}$. Suppose further  that $G\leq\Aut(\Ga)$ is such that
$\Ga$ is $G$-strongly incidence transitive and $G$ is one of the sporadic 
almost simple 2-transitive groups on $\V$ mentioned in Section~\ref{intro}.
For a subset $\al\subseteq\V$ we often write $\ov\al$ for its
complement $\ov\al:=\V\setminus\al$, so that, by definition, $G$ is transitive 
on $\Ga$ and, for $\ga\in\Ga$, $G_\ga$ acts 
transitively on $\ga\times\ov\ga=\{(\u,\w)\,|\,\u\in\ga, \w\in\ov\ga\}$. 
As noted in Subsection~\ref{summary},  $\Ga$ is a proper subset of $\binom{\V}{k}$
since $\delta(\Ga)>1$. It follows that the group $G$ is not transitive on $\binom{\V}{k}$, 
that is to say, $G$ is not $k$-homogeneous on $\V$. In particular $k\geq 3$, and 
$G$ does not contain the alternating group $A_v$.  We make a few preliminary observations.

\medskip\noindent
\emph{Notation:}\quad  
Let $\ga\in\Ga$ so that $G_\ga$ is transitive on $\ga\times\ov\ga$. 
In particular $k(v-k)$ divides $|G_\ga|$, and $G$ is not $k$-homogeneous. 
Let $G_\ga\leq H<G$ with $H$ maximal in $G$.  If $H$ is intransitive on $\V$ then, 
as $G_\ga$ has only two orbits, we must have $H=G_\ga$. On the other hand, if $H$ is 
transitive on $\V$, we have the following information. 

\begin{lemma}\label{obs}
If $H$ is transitive on $\V$ and $N$ is a normal subgroup of $H$ which is intransitive on $\V$,
then $\ga$ is a union of $N$-orbits.
\end{lemma}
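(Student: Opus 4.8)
The plan is to reformulate the conclusion as the statement that $G_\ga$ preserves the partition of $\V$ into $N$-orbits, and then to exploit the strong incidence transitivity of $G_\ga$ to force a dichotomy. First I would record the elementary fact that, since $N$ is normal in $H$, the group $H$ permutes the set of $N$-orbits on $\V$: for $h\in H$ the image of an $N$-orbit $\u^N$ is $(\u^h)^N$, again an $N$-orbit. Because $G_\ga\leq H$ (from the setup $G_\ga\leq H<G$ in the Notation paragraph), $G_\ga$ likewise permutes the $N$-orbits. Since $G_\ga$ is the set-wise stabiliser of $\ga$, the desired conclusion ``$\ga$ is a union of $N$-orbits'' is exactly the assertion that each $N$-orbit lies entirely inside $\ga$ or entirely inside $\ov\ga$.

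Next I would consider the binary relation on $\ga\times\ov\ga$ given by ``$\u$ and $\w$ lie in the same $N$-orbit''. Because $G_\ga$ preserves the $N$-orbit partition, this relation is invariant under the action of $G_\ga$ on pairs $(\u,\w)$. By strong incidence transitivity $G_\ga$ is transitive on $\ga\times\ov\ga$, so the relation must be constant on this single orbit: \emph{either} no pair $(\u,\w)\in\ga\times\ov\ga$ has $\u,\w$ in a common $N$-orbit, \emph{or} every such pair does. The first alternative says precisely that no $N$-orbit meets both $\ga$ and $\ov\ga$, which is exactly the statement that $\ga$ is a union of $N$-orbits.

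It therefore remains to exclude the second alternative, and this is where the hypothesis that $N$ is \emph{intransitive} enters. Suppose every pair $(\u,\w)$ with $\u\in\ga$, $\w\in\ov\ga$ lay in a common $N$-orbit. Fixing $\u_0\in\ga$ forces all of $\ov\ga$ into the single $N$-orbit containing $\u_0$, and fixing $\w_0\in\ov\ga$ forces all of $\ga$ into the $N$-orbit containing $\w_0$; since these two orbits share the points $\u_0$ and $\w_0$ they coincide, so $\V=\ga\cup\ov\ga$ is a single $N$-orbit, contradicting the intransitivity of $N$. The one point needing care — the main (though modest) obstacle — is the clean identification of the two alternatives with ``$\ga$ is a union of $N$-orbits'' and ``$N$ is transitive'' respectively; once the dichotomy coming from strong incidence transitivity is in place, both identifications are immediate and the lemma follows.
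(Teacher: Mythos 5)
Your proof is correct, but it follows a genuinely different route from the paper's. Both arguments begin from the same observation that $G_\ga\le H$ permutes the $N$-orbits, but the paper then localises at a single $N$-orbit $\alpha$ meeting $\ga$ in a point $\u$: if $\alpha\not\subseteq\ga$ it uses only the transitivity of the point stabiliser $G_{\ga,\u}$ on $\ov\ga$ to force $\ov\ga\subseteq\alpha$, concluding that $\ga$ contains every other $N$-orbit and hence $|\ga|>v/2$, contradicting the standing hypothesis $k\le v/2$. You instead use the full transitivity of $G_\ga$ on the product $\ga\times\ov\ga$ to show that the $G_\ga$-invariant relation ``same $N$-orbit'' is constant on that single orbit, and you rule out the bad alternative by showing it would make $N$ transitive, contradicting the hypothesis directly. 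Your dichotomy argument is arguably cleaner and slightly more general, since it nowhere invokes $k\le v/2$ (only that $\ga$ and $\ov\ga$ are both non-empty), whereas the paper's cardinality contradiction genuinely depends on that normalisation; on the other hand, the paper's argument uses only transitivity of $G_{\ga,\u}$ on $\ov\ga$ rather than transitivity on the full product, so the two proofs trade off which part of strong incidence transitivity they consume. Both are complete and correct.
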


\begin{proof}
Let $\alpha$ be an $N$-orbit containing a point of $\ga$, say $\u$. 
Then $\alpha$ is a block of imprimitivity for $H$ in $\V$. Thus, if $\alpha\subseteq \ga$, 
then $\ga$ is a union of $G_\ga$-translates of $\alpha$ and the result follows. 
Assume then that $\alpha$ also contains a point of $\bar\ga$. Then $G_{\ga,\u}$ fixes 
$\alpha$ set-wise, and also is transitive on $\bar\ga$. Hence $\bar\ga\subset\alpha$. 
This implies that $\ga$ contains all $N$-orbits distinct from $\alpha$, while meeting 
$\alpha$ in a proper non-empty subset. Hence $|\ga|>v/2$, which is 
a contradiction.
\end{proof}

We deal with each of the sporadic almost simple 2-transitive groups in turn. We use 
information from the Atlas~\cite{At}, supplemented in some cases with the aid of the 
computer system {\sf GAP} \cite{GAP} as explained in
Section~\ref{sect:computational}.

We give the proof for each group separately. Since we construct all
codes explicitly on the computer, we can easily check that in the
cases with $k=v/2$ (lines $3$, $16$ and $25$--$27$ in
Table~\ref{tbl-sporad}) the code is self-complementary. We use the notation
introduced at the end of Section~\ref{sect:organisation}.

\subsection{$G=L_2(11)$ with $v=11$} Here $3\leq k\leq 5$, and
since $k(11-k)$ divides $|G_\ga|$, $k$ must be 5, and $G_\ga=A_5$
with orbits of sizes 5, 6 on $\V$. The set $\Ga$ consists of the
blocks of the unique Hadamard $2$-$(11,5,2)$ design (a 2-transitive
biplane) as in Line 1 of Table~\ref{tbl-sporad}, and is $G$-strongly
incidence-transitive. By a result of Ryser (see \cite[Proposition
3.2]{BJL}) each pair of codewords meet in exactly two points of $\V$ and
hence $\delta(\Gamma)=3$.

\subsection{$G=A_7$ with $v=15$ acting on $\PG(3,2)$} Here $3\leq
k\leq 7$, and $k(15-k)$ divides $|G_\ga|$, and hence $k=3$ or $7$.
There are two $G$-orbits on 3-subsets, namely the lines and triangles
of $\PG(3,2)$. If $\ga$ is a line then $G_\ga=A_7\cap(S_3\times
S_4)$ and is transitive on $\ga\times\ov\ga$, as in Line 23 of
Table~\ref{tbl-sporad}, and is $G$-strongly incidence-transitive. On
the other hand the stabiliser $S_3$ of a triangle does not have this
property. If $k=7$, then the orbits of $G_\ga$ in $\V$ have lengths 7
and 8, and it follows that $\ga$ is a plane and $G_\ga=H=L_2(7)$ is
its stabiliser, as in Line 2 of Table~\ref{tbl-sporad}, and $\Ga$ is
$G$-strongly incidence-transitive.

\subsection{$G={\rm M}_{11}$ and $v=11$} Here $5\leq k\leq 11/2$ since
$G$ is $4$-transitive, so
$k=5$. It follows that $G=S_5$ and $\Ga$ is the set of pentads of the
Witt $4$-$(11,5,1)$ design, as in Line 24 of Table~\ref{tbl-sporad}, and
$\Ga$ is $G$-strongly incidence-transitive.

\subsection{$G={\rm M}_{11}$ and $v=12$}\label{m11on12} Here $4\leq
k\leq 6$ since $G$ is $3$-transitive. 
Since $k(12-k)$ divides $|G_\ga|$, it follows that $k=6$
and $|G_\ga|$ is divisible by 36. By \cite[page 18]{At}, the only
maximal subgroups $H$ with order divisible by 36 are $H_1=A_6\cdot2$ and
$H_2=3^2:Q_8.2$, and each of these is transitive on $\V$. Thus $G_\ga$
is a proper subgroup of $H_i$ for some $i$. The group $G$ has two orbits
on the 132 blocks of the Witt $5$-$(12,6,1)$ design, of lengths 22 and
110, and the subgroups $H_1, H_2$ are stabilisers of blocks in these
orbits. Thus $\ga$ is a block of the Witt design.

The first group $H_1$ is imprimitive on $\V$ with 2 blocks of length 6,
and hence if $G_\ga< H_1$ then $G_\ga=H_1'=A_6$ is the stabiliser of
a total, a certain subset of 22 blocks of the Witt design, as in Line
3 of Table~\ref{tbl-sporad}. In particular $G_\ga$ is transitive on
$\ga\times\bar\ga$ so $G$ is strongly incidence-transitive on $\Ga$.
(This code corresponds to a set of 22 words in the ternary Golay code
preserved by $M_{11}$ - see \cite[p.18]{At}).

Now suppose that $G_\ga<H_2$. The group $G_\ga$ contains the normal
Sylow 3-subgroup $P$ of $H_2$ and $P$ has 4 orbits in $\V$ of length 3.
An element $h\in H_2$ of order 8 permutes the $P$-orbits transitively,
so $P< G_\ga\leq 3^2:Q_8$, the 6-subset $\ga$ is a union of two
$P$-orbits, and $H_2$ induces $D_8$ on the set of $P$-orbits. This
$H_2$-action has a unique set of blocks of size 2. If $\ga$ were a
union of one $P$-orbit from each of these blocks then $G_\ga$ would
have order only 18. Hence $\ga$ is the union of $P$-orbits in one
of the $H_2$-blocks and $G_\ga=P:Q_8$. It is not difficult to check
that $G_\ga$ is transitive on $\ga\times\bar\ga$, as in Line 25 of
Table~\ref{tbl-sporad}.

\subsection{$G={\rm M}_{12}$ and $v=12$} Here $6\leq k\leq\frac{v}{2}$ since
$G$ is $5$-transitive, so $k=6$. Suppose first that $H$ is intransitive
on $\V$. Then $H$ has two orbits of length 6, namely $\ga$ and
$\bar\ga$, but by \cite[page 33]{At} there is no such maximal subgroup.
Hence $H$ is transitive on $\V$. Suppose first that $H=A_6\cdot 2^2$,
the stabiliser of a hexad pair. Let $N$ be the index 2 subgroup
fixing the two hexads set-wise. By Lemma~\ref{obs}, $\ga$ is one of
these hexads, that is, a block of the Witt $5$-$(12,6,1)$ design.
Hence $G_\ga=N$ is transitive on $\ga\times\bar\ga$, as in Line 26 of
Table~\ref{tbl-sporad}.

Assume from now on that $\ga$ is not a hexad. 

Suppose next that $H={\rm M}_{11}$ acting transitively on $\V$. Then, by
our arguments in \ref{m11on12} above, $G_\ga$ is the stabiliser in $H$
of a hexad, which contradicts our assumption that $\ga$ is not a hexad.
Then, since $|H|$ is divisible by 36 and $H$ is transitive on $\V$, we
see from \cite[page 33]{At} that the remaining cases are $H_1=M_9:S_3$
stabilising `linked threes' and $H_2=A_4\times S_3$ stabilising a
`$4\times 3$ array'. In the former case, $O_3(H)$ has four orbits of
length 3, any two of which form a hexad (see \cite[p.31]{At}). This
implies that $\ga$ is a hexad, contradiction.
Thus $H=A_4\times S_3$ stabilising a `$4\times 3$ array'. The subgroup
$G_\ga$ must be transitive on the three columns of the array, so $\ga$
must be a union of two of the rows. This however implies that, for
$u\in\ga$, $G_{\ga,u}$ cannot be transitive on $\bar\ga$.

\subsection{$G={\rm M}_{22}$ with $v=22$}
Here $4\leq k\leq 11$ because $G$ is $3$-transitive, and $k\ne 5,9,11$ since
$k(22-k)$ divides $|G_\ga|$. All maximal subgroups
have two orbits on $\V$ (see \cite[page 39]{At}), so $G_\ga = H$. We
obtain examples of subgroups $H$ with orbits of lengths $k$ and $22-k$ as
follows: $2^4:A_6$ with $k=6$ (hexads, that is, certain blocks of the
$5$-$(24,8,1)$ design with two points removed); $A_7$ with $k=7$ (heptads,
that is, blocks of the $5$-$(24,8,1)$ design minus a point;
$2^3:L_3(2)$ with $k=8$ (octads, that is, blocks
of the $5$-$(24,8,1)$ design); $M_{10}\cong A_6 \cdot 2_3$ 
with $k=10$ (decad). There is
no suitable subgroup when $k=4$. In all these cases the group $H$ is
transitive on $\ga\times\ov\ga$, and we have the examples in Lines 4--7
of Table~\ref{tbl-sporad}.

\subsection{$G= \Aut({\rm M}_{22})={\rm M}_{22}.2 $ with $v=22$}
This is very similar to the ${\rm M}_{22}$ case. Again, $4\leq k\leq 11$ 
because $G$ is $3$-transitive, and since
$k(22-k)$ divides $|G_\ga|$, $k\ne 5,9,11$. From the intransitive
maximal subgroups of $G$ we get $3$ more codes which are in Lines
8, 10 and 11 of Table~\ref{tbl-sporad}. They have the same parameters
as the ones for ${\rm M}_{22}$ in Lines 4, 6 and 7 respectively.
The transitive maximal subgroups $L_3(4):2_2$ and $L_2(11):2$ of $G$ 
provide no further example which we verified using the computational
approach described in Section~\ref{sect:computational}. However, 
the maximal chain $A_7 < {\rm M}_{22}< G$ gives a further example
which is in Line 9. The other maximal subgroups of ${\rm M}_{22}$ do
not provide any new code for $G$ since they are properly contained in
intransitive maximal subgroups of $G$.

\subsection{$G={\rm M}_{23}$ and $v=23$} Here $5\leq k\leq 11$ because $G$
is $3$-transitive, and $k \ne 6, 10$
since $k(23-k)$ divides $|G_\ga|$. All maximal subgroups
have two orbits in $\V$, except the last-listed subgroup $23:11$ on
\cite[page 71]{At}, which is too small to have $G_\ga$ as a subgroup,
Hence $G_\ga=H$. We obtain examples of subgroups with orbits of lengths
$k$ and $23-k$ as follows: $H=2^4:A_7$ with $k=7$, $\ga$ a block of
the $4$-$(23,7,1)$ Witt design; $H=A_8$ with $k=8$, blocks $\ga$ being
certain octads of the $5$-$(24,8,1)$ design; and $H={\rm M}_{11}$ with
$k=11$, $\ga$ an endecad (a dodecad of the $5$-$(24,8,1)$ design minus a
point). Each of these subgroups $H$ has three orbits on pairs of points
(see \cite[p.71]{At}) and hence is transitive on $\ga\times\ov\ga$, so
we have the examples in Lines 12--14 of Table~\ref{tbl-sporad}.

\subsection{$G={\rm M}_{24}$ and $v=24$} Here $6\leq k\leq12$ because $G$
is $5$-transitive, and $k \ne 7, 11$
since $k(24-k)$ divides $|G_\ga|$. The last two maximal
subgroups $L_2(23), L_2(7)$ listed on \cite[p.96]{At} have orders not
divisible by $k(24-k)$ for any suitable $k$. Four of the remaining
maximal subgroups $H$ are intransitive on $\V$, and of these only
$H=2^4:A_8$ has shortest orbit length at least $6$ and that length
is $k=8$ with $\ga$ an octad, that is, a block of the $5$-$(24,8,1)$
design. In this case $G_\ga=H$ has orbits in $\V$ of lengths 8, 16,
and the stabiliser of a point $u\in\ga$, namely $2^4:A_7$, is a
maximal subgroup of $M_{23}$ with two orbits in $\V\setminus\{u\}$
by \cite[p.71]{At}. Thus $G_\ga$ is transitive on $\ga\times\bar\ga$
and we have the example in Line 15 of Table~\ref{tbl-sporad}. 

So we may assume that $H$ is one of the three transitive maximal
subgroups ${\rm M}_{12}:2$ or $2^6:3\dot{}\,S_6$ or $2^6:(L_3(2)\times
S_3)$, stabilising a `duum', `sextet', or `trio' respectively (see
\cite[pp.94--96]{At}). In the first case, $\ga$ must be one of the two
orbits of $H'={\rm M}_{12}$ of length $12$, by Lemma~\ref{obs}. Hence
$G_\ga=H'$, which is transitive on $\ga\times\bar\ga$, as in Line 16 of
Table~\ref{tbl-sporad}. In the second case, the `sextet' is a partition
of $\V$ into six `tetrads' any two of which form an octad, and any
division into two sets of three tetrads forms a duum.
This case was treated computationally: examining each of the maximal 
subgroups of $H$ lead to exactly one example, arising from the subgroup 
$2^6:3.(S_3 \times S_3)$, as in Line 27 of Table~\ref{tbl-sporad}. 

Finally consider the third
case $H=2^6:(L_3(2)\times S_3)$. 
Let $N=2^6:L_3(2)$, a normal subgroup of $H$ with three orbits of
length 8, each of them an octad. By Lemma~\ref{obs}, $\ga$ is one of
these $N$-orbits, which is a contradiction.

\subsection{$G=\PGaL_2(8)=L_2(8).3=\Ree(3)$ with $v=28$}
\label{PSL28}
Our computational approach described in Section~\ref{sect:computational}
readily proves that this group does not provide an example. The same is 
true for all the Ree groups and is proved theoretically 
in \cite[Proposition 9.3]{LP}.

\subsection{$G={\rm HS}$ with $v=176$} Here $3\leq k\leq 88$. Suppose
first that $G_\ga$ is contained in a proper transitive subgroup $H$ of
$G$. Then, for $u\in\ga$, $G=HG_u$ and it follows from \cite[Section
6.7]{factns} that $H={\rm M}_{22}$ and $G_{\ga,\u}\leq H_\u=A_7$.
Since $G_\ga$ has two orbits on $\V$ it follows from the permutation
characters given in \cite[page 39]{At} that $G_\ga=L_3(4)$ or
$2^4:A_6$.
In the former case, $|H_\u:G_{\u,\ga}|=$ 7 or 15, and                   
$|G_\ga:G_{\ga,\u}|=$ 56 or 120 respectively. Thus $k=56$ and for       
$\u\in\ga$, $G_{\ga,\u}=A_6$; however $G_{\ga,\u}$ has at least 2       
orbits in $\ov\ga$ (see the permutation characters given in \cite[page  
23]{At}). Thus $G_\ga=2^4:A_6$. In this case, $|H_\u:G_{\u,\ga}|=$ 35   
or 42, and $|G_\ga:G_{\ga,\u}|=$ 80 or 96 respectively. Thus $k=80$,    
but then $k(v-k)$ does not divide $|2^4:A_6|$.                          

Thus we conclude that $G_\ga=H$ is a maximal intransitive subgroup of
$G$ with two orbits in $\V$, and since $k\geq3$, $H$ is not a vertex or
edge stabiliser of the Higman Sims graph. If $G_\ga\cong U_3(5):2$ has
index 176 in $G$ then it is the stabiliser of a `quadric' and $k=50$
(see \cite[page 80]{At}). Moreover (see \cite[page 34]{At}) $G_\ga$ is
indeed transitive on $\ga\times\ov\ga$, and we have the example in Line
17 of Table~\ref{tbl-sporad}.
This is the $2$-$(176,50,14)$ design constructed in \cite{Higman}.

If $G_\ga \cong L_3(4):2_1$ we get another example with $k=56$ because
$G_\ga$ is transitive on $\ga \times \ov\ga$. This example is given in 
Line~18 of Table~\ref{tbl-sporad}. All other intransitive maximal subgroups
have either more than $2$ orbits or are not transitive on $\ga \times
\ov\ga$, so do not give rise to any more examples. We established these facts
using direct computations in \textsf{GAP}.

\subsection{$G={\rm Co}_3$ with $v=276$} Here $3\leq k\leq 138$. The
information in \cite[p.134]{At} is not sufficient to deal with this
group so we use the {\sf GAP} computer system as described in
Section~\ref{sect:computational}. There are 14 conjugacy
classes of maximal subgroups $H$. All are intransitive on $\V$, so
$G_\ga = H$. A {\sf GAP} computation showed that exactly eight of
the classes have two orbits on $\V$ --- these orbits will be $\ga$ and
$\bar\ga$. For a representative $H$ of each of these eight classes, we
chose a point $u$ in one of the $H$-orbits and computed the number of
$H_u$-orbits in the other $H$-orbit. This number is 1 if and only if
$H$ is transitive on $\ga\times\bar\ga$. The computed number of orbits
was 1 for exactly five maximal subgroups. One of these was the point
stabiliser $McL:2$ which does not lead to an example since in that case
we would have $k=1$. Thus we obtain exactly four examples as in Lines
19-22 of Table~\ref{tbl-sporad}.

Note that the code in Line 21 is the $2$-$(276,100,1458)$ design
presented in \cite{Haemers}, and that the other three are also
$2$-$(276,k,\lambda)$ designs for certain values of $\lambda$. We
established the latter fact using the \textsf{DESIGN} \textsf{GAP} package \cite{DESIGNGAP}.


\begin{thebibliography}{10}




\bibitem{NLCodes}
M.~R.~Best, A.~E.~Brouwer, F.~J.~MacWilliams,  A.~M.~Odlyzko, and 
N.~J.~A.~Sloane, 
Bounds for binary codes of length less than 25.
\emph{IEEE Trans. Infomation Theory}  {\bf IT-24}  (1978),  no. 1, 81--93.
Updated table available at
\texttt{http://www.win.tue.nl/\~{}aeb/codes/binary-1.html}

\bibitem{BJL}
T. Beth, D. Jungnickel, and H. Lenz,
\emph{Design theory}. Cambridge University Press, Cambridge, 1986.

\bibitem{Cam}
P. J. Cameron, \emph{Permutation Groups}, London Math. Soc. student 
texts {\bf 45}, Cambridge Univ. Press, 1999.

\bibitem{At}
J. H. Conway, R. T. Curtis, S. P. Norton,  R. A. Parker,
 and R. A. Wilson, \emph{Atlas of finite groups},
Oxford University Press, Eynsham, 1985.

\bibitem{Delsarte}
P. Delsarte, 
An algebraic approach to the association schemes of coding theory.
\emph{Philips Res. Rep. Suppl.} No. 10 (1973), vi+97 pp. 

%

\bibitem{Dur}
N. Durante, On sets with few intersection numbers in finite
projective and affine spaces, submitted, 2013.

\bibitem{GAP}
The {\sf GAP} Group, \emph{{\sf GAP} -- {G}roups, {A}lgorithms and
{P}rogramming}, Version 4.5.6, 2012.
\texttt{http://www.gap-system.org}

\bibitem{CodeTables}
M.~Grassl,
Bounds on the minimum distance of linear codes and quantum codes.
Online available at \texttt{http://www.codetables.de}.
Accessed on 2012-11-01.

\bibitem{Haemers}
W.~H.~Haemers, C.~Parker, V.~Pless, V.~Tonchev, 
A design and a code invariant under the simple group $\mbox{Co}_3$.
 \emph{J. Combin. Theory Ser. A}  {\bf 62}  (1993),  no. 2, 225--233.

\bibitem{Higman}
G.~Higman, On the simple group of D. G. Higman and C. C. Sims.
 \emph{Illinois J. Math.},  {\bf 13}  (1969), 74--80.

%
%
%
%
%
%
%

%



\bibitem{factns}
M. W. Liebeck, C. E. Praeger and J. Saxl, The maximal factorizations of 
the finite simple groups and their automorphism groups,  
\emph{Mem. Amer. Math. Soc.}  {\bf86}  (1990),  no. 432.

\bibitem{LP}
R. A. Liebler and C. E. Praeger, Neighbour-transitive codes in Johnson graphs,
preprint, 2012.

\bibitem{Martin94}
William J. Martin, 
Completely regular designs of strength one. 
\emph{J. Algebraic Combin.} \textbf{3} (1994), 177--185.

\bibitem{Martin98}
William J. Martin, 
Completely regular designs. 
\emph{J. Combin. Des.} \textbf{6} (1998), 261--273.


\bibitem{Meyerowitz1}
A. Meyerowitz, 
Cycle-balanced partitions in distance-regular graphs. 
\emph{J. Combin. Inform. System Sci.} {\bf17} (1992), 39--42.

\bibitem{Meyerowitz2}
A. Meyerowitz, 
Cycle-balance conditions for distance-regular graphs. 
In: The 2000 $\rm Com^2MaC$ Conference on Association Schemes, Codes and Designs (Pohang).
\emph{Discrete Math.} {\bf264} (2003), 149--165. 

\bibitem{NTCodes}
M.~Neunhoeffer and C. E.~Praeger, \textsf{GAP}-readable code for 
Sporadic Neighbour-Transitive Codes in the Johnson Graphs,\\
see \texttt{http://www-groups.mcs.st-and.ac.uk/\~{}neunhoef/Mathematics/neightrans.html}

\bibitem{NP}
M.~Neunhoeffer and C. E.~Praeger, 
Complementary and self-complementary incidence-transitive codes in Johnson graphs.
In preparation. 


\bibitem{DESIGNGAP}
L.~H.~Soicher, \textsf{DESIGN} package for \textsf{GAP}, version 1.6, 2011,\\
see \texttt{http://www.gap-system.org/Packages/design.html}

%

%
%
%
%

\bibitem{WWWAtlas}
R.~Wilson, P.~Walsh, J.~Tripp, I.~Suleiman, R.~Parker, S.~Norton, 
S.~Nickerson, S.~Linton, J~.Bray, and R.~Abbott, the Atlas of Finite
Group Representations,\par
see \texttt{http://brauer.maths.qmul.ac.uk/Atlas/v3/}


\end{thebibliography}
\end{document}